\newcommand{\D}{\operatorname{d}}
\newcommand\blfootnote[1]{%
  \begingroup
  \renewcommand\thefootnote{}\footnote{#1}%
  \addtocounter{footnote}{-1}%
  \endgroup
}
\newtheorem{theorem}{Theorem}[section]
\newtheorem{corollary}{Corollary}[theorem]
\newtheorem{lemma}[theorem]{Lemma}
\newtheorem{proposition}[theorem]{Proposition}
\title{Pseudo-Riemannian and Hessian Geometry Related to Monge-Ampère Structures}
\author{
S. Hronek\footnote{Department of Theoretical Physics and Astrophysics, Faculty of Science, Masaryk University
611 37 Brno, Czech Republic} \\ R. Suchánek\footnote{Dept. of Mathematics and Statistics, Faculty of Science, Masaryk University
611 37 Brno, Czech Republic} \footnote{Angevin Laboratory of Mathematical Research - UMR CNRS 6093, University of Angers, 2 Boulevard Lavoisier, 49045, Angers CEDEX 0, France.}
}
\begin{document}

\maketitle

\noindent \textsc{Abstract. } We study properties of pseudo-Riemannian metrics corresponding to Monge-Ampère structures on four dimensional $T^*M$. We describe a family of Ricci flat solutions, which are parametrized by six coefficients satisfying the Plücker embedding equation. We also focus on pullbacks of the pseudo-metrics on two dimensional $M$, and describe the corresponding Hessian structures. 

\blfootnote{
2022 Mathematics Subject Classification: primary 53B20; secondary 83C15
}
\blfootnote{
Key words and phrases: Einstein equation, General relativity, Hessian structure, Lychagin-Rubtsov metric, Monge-Ampère structure, Monge-Ampère equation, Plücker embedding.}

%%%%%%%%%%%%%%%%%%%%%%%%%%%%%%%%%
%%%%%%%%%%%%%%%%%%%%%%%%%%%%%%%%%

%%%%%%%%%%%%%%%%%%%%%%%%%%%%%%
\section{Introduction}
%%%%%%%%%%%%%%%%%%%%%%%%%%%%%%

Let $T^*M \xrightarrow{\pi} M$ be the cotangent bundle over a real, smooth, two dimensional manifold $M$. Let $\Omega \in \Omega^2(T^*M)$ be the canonical symplectic form, which in the Darboux (or canonical) coordinates is written as
    \begin{align}\label{eq: symplectic form}
        \Omega = \D x\wedge \D p + \D y\wedge \D q \ .
    \end{align}
Let us further consider a $2$-form $\alpha \in \Omega^2(T^* M)$ given by
\begin{align}\label{def: general alpha for 2D SMAE}
        \alpha = A \D p \wedge \D y + B( \D x \wedge \D p - \D y \wedge \D q) + C \D x \wedge \D q + D \D p \wedge \D q + E \D x \wedge \D y \ , 
    \end{align} 
where $A,B,C,D,E \in C^\infty (T^*M)$ are smooth functions. If $\Omega \wedge \alpha = 0$, then the pair $(\Omega, \alpha)$ is called a Monge-Ampère (M-A) structure over $T^*M$ \cite{kushner_lychagin_rubtsov_2006, Lychagin}. This terminology reflects the fact that the pairs $(\Omega, \alpha)$ are in correspondence with a~subclass of all nonlinear second-order PDEs, called Monge-Ampère equations, in the following way. Choose a function $f \in C^\infty (M)$. The deRham differential of $f$ gives rise to a section, $\D f \colon M \to T^*M$, and one can consider the pullback $(\D f )^* \alpha \in \Omega^2(M)$. Then the equation
    \begin{align}\label{def: M-A equation}
        (\D f )^* \alpha = 0
    \end{align}
defines a nonlinear second order PDE with respect to $f$, where the nonlinearity is given by the determinant of the Hessian matrix of $f$. In the above chosen coordinates, the equation \eqref{def: M-A equation} writes
    \begin{align}\label{eq: M-A equation in coordinates}
        A f_{xx} + 2B f_{xy} + C f_{yy} + D \left( f_{xx} f_{yy} - {f_{xy}}^2 \right) + E  = 0  \ ,
    \end{align}
where $A,\ldots,E$ are the coefficients given in \eqref{def: general alpha for 2D SMAE}, but now depending on $x,y,f_x, f_y$, instead of $x,y,p,q$. The equation \eqref{eq: M-A equation in coordinates} is called a \textit{2D (symplectic) Monge-Ampère equation} (shortly just M-A equation). M-A equations arise and have rich applications, for example, in differential geometry of surfaces, integrability of geometric structures, hydrodynamics, acoustics, variational calculus, Riemannian, CR, or complex geometry \cite{BANOS20112187, kushner_lychagin_rubtsov_2006, Lychagin, ASENS_1993_4_26_3_281_0, Rubtsov2019}. For a detailed exposition of some of these applications and geometric treatment of M-A equations, especially in 2D and 3D, see \cite{Kosmann-Schwarzbach2010, kushner_lychagin_rubtsov_2006}.  

From a different perspective, M-A structure $(\Omega, \alpha)$ yields other geometric structures on $T^*M$, for example, complex, product, Kähler, or nearly Calabi-Yau structures (depending on various assumptions on the coefficients $A,\ldots,E$) \cite{BANOS2007841, BANOS20112187, Kosmann-Schwarzbach2010,kushner_lychagin_rubtsov_2006, Lychagin, Rubtsov_1997, Rubtsov2019}. We call a M-A structure \textit{non-degenerate}, if the \textit{Pfaffian of a M-A structure}, which is given by the equation
    \begin{align}\label{eq: Pfaffian}
        \alpha \wedge \alpha = \operatorname{Pf}(\alpha) \Omega \wedge \Omega  \ ,
    \end{align}
satisfies $ \operatorname{Pf}(\alpha) \neq 0$. The sign of the Pfaffian decides whether M-A structure gives rise to a complex or product structure, and whether the M-A equation corressponding to the M-A structure is elliptic or hyperbolic \cite{BANOS2007841, kushner_lychagin_rubtsov_2006, Lychagin}. We have found that non-degeneracy of M-A structures on $T^*M$ is equivalent to non-degeneracy of certain bilinear forms on $M$.

Inspired by applications in theoretical meteorology \cite{VolRoul2015, ASENS_1993_4_26_3_281_0, RoubRoul2001}, we are interested in a specific family of symmetric bilinear forms, parameterized by $2$-forms $\alpha$ as given in \eqref{def: general alpha for 2D SMAE} (or, equivalently, by the corresponding coefficients $A, \ldots, E$). More concretely, there is a map
    \begin{align*}
        \Omega^2(T^* M) & \to S^2(T^*M) \ , \\
        \alpha & \mapsto g_\alpha \ ,
    \end{align*}
given locally by 
    \begin{align}\label{eq: L-R def}
         g_\alpha (X,Y) : = \frac{2(\iota_X \alpha \wedge \iota_Y \Omega + \iota_Y \alpha \wedge \iota_X \Omega) \wedge \pi^*\operatorname{vol} }{\Omega \wedge \Omega} \ ,
    \end{align}
where $X,Y \in \Gamma (TT^*M)$, $\iota$ is the interior product, and $\pi^* \operatorname{vol} \in \Omega^2(T^*M)$ is the pullback of a locally chosen top form $\operatorname{vol} \in \Omega^2(M)$ along the cotangent bundle projection. Under a mild assumption on $\alpha$, the symmetric form $g_\alpha$ is a~pseudo-metric on $T^*M$, which is called \textit{Lychagin-Rubtsov metric} \cite{Banos2006IntegrableGA, BANOS20112187, phdthesis}.

The L-R metrics are the main objects of our interest. They find applications, for example, in theoretical meteorology \cite{VolRoul2015, roulstone2009kahler, Rubtsov_1997, Rubtsov2019}. In particular, we want to mention a paper in progress of I. Roulstone, V. Rubtsov, and M. Wolf, where an approach motivated by general principles of topological fluid dynamics is employed. Specifically, the curvature of L-R metric \eqref{eq: L-R def} is used to study geometric aspects of semi-geostrophic incompressible flows in 2D and 3D, which are associated with the accumulation of vorticity of the flow. These results were discussed on a series of lectures \textit{Monge–Ampère Geometry and the Navier–Stokes Equations}, given by I. Roulstone during \textit{Winter School and Workshop Wisla 22} held in the beginning of February 2022. 

\subsection{Stucture of the paper}
After the introduction we focus on properties of L-R metrics over the 4D cotangent bundle $T^*M$ and investigate the Ricci flat case. Using the Plücker embedding, we show that there is a correspondence between the set given by Ricci flat $g_\alpha$ and points in projective space. We then consider the pullback of $g_\alpha$ along sections of $T^*M \to M$ and describe their basic features, as well as some particular cases, among which are Hessian structures. Hessian structures are connected to and have applications in, for example, affine differential geometry, homogeneous spaces, cohomology theory, statistical manifolds, string theory, etc. \cite{AMARI20141, hirohiko2007, doi:10.1142/S0129167X04002338}. We also find a special subclass of Hessian structures given by the solutions of certain Monge-Ampère equations. In this case, we compute the Koszul forms and comment on the related Kähler structure and its Ricci tensor. We note that more general M-A structures (i.e. $A,B,C$ not necessarily zero) naturally lead to deformations of the Hessian structures on $M$. We show that the pullback metrics behave rather independently of the original ones, and that the non-degeneracy of a special subset of the pullback metrics is equivalent to non-degeneracy of the corresponding M-A structures.

%%%%%%%%%%%%%%%%%%%%%%%%%%%%%%%%%%%%%%%%%%%%%%%%%%%%%%%
\section{Pseudo-Riemannian Lychagin-Rubtsov metric on $T^* M$}
%%%%%%%%%%%%%%%%%%%%%%%%%%%%%%%%%%%%%%%%%%%%%%%%%%%%%%%

We start with some basic information about the L-R metric. In the canonical coordinates, the matrix of $g_\alpha$ defined by \eqref{eq: L-R def} is 
    \begin{align}\label{eq: L-R symmetric tensor}
        G_\alpha = 
        \begin{pmatrix}
        2C & -2B & D & 0 \\
        -2B & 2A & 0 & D \\
        D & 0 & 0 & 0 \\
        0 & D & 0 & 0
        \end{pmatrix} \ .
    \end{align}
We note the independency of $G_\alpha$ on $E$. From $\det G_\alpha = D^4$ further follows the non-degeneracy condition is 
    \begin{align*}
        \det G_\alpha \neq 0 \iff D \neq 0 \ .
    \end{align*}

\begin{lemma}\label{lemma: basic info about L-R metric}
Let $g_\alpha$ be the L-R metric given by \eqref{eq: L-R def}. Suppose that $\alpha$ given by \eqref{def: general alpha for 2D SMAE} satisfies $D \neq 0$. Then $g_\alpha$ is a pseudo-Riemannian metric field on $T^*M$ with signature $(2,2)$.  
\end{lemma}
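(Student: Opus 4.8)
The key is that the matrix $G_\alpha$ in \eqref{eq: L-R symmetric tensor} has a block structure
\[
G_\alpha = \begin{pmatrix} S & D\,I_2 \\ D\,I_2 & 0 \end{pmatrix}, \qquad S = \begin{pmatrix} 2C & -2B \\ -2B & 2A \end{pmatrix},
\]
where $I_2$ is the $2\times 2$ identity. So I would first invoke the already-established fact $\det G_\alpha = D^4 \neq 0$ to conclude that $g_\alpha$ is nondegenerate everywhere, hence a genuine pseudo-metric field (smoothness is clear since $A,\dots,E$ are smooth). It remains to pin down the signature, which by continuity and connectedness of the fibres/coordinate charts is constant; since the problem is pointwise linear algebra, it suffices to compute it at a single point, or better, to exhibit an explicit diagonalizing change of basis valid at every point.

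The cleanest route is a symplectic-style ``completion of squares'' on the block form. Writing a tangent vector as $(u,v)$ with $u,v \in \mathbb{R}^2$ in the $(\partial_x,\partial_y \mid \partial_p,\partial_q)$ splitting, the quadratic form is $Q(u,v) = u^\top S u + 2D\, u^\top v$. Completing the square in $v$: set $w = v + \tfrac{1}{2D} S u$ (an invertible, block-triangular, and crucially \emph{linear with smooth coefficients} change of coordinates, legitimate because $D\neq 0$), which gives $Q = 2D\, u^\top w$. Thus $g_\alpha$ is linearly equivalent to the standard pairing form $2D\sum_{i} u_i w_i$ on $\mathbb{R}^2\oplus\mathbb{R}^2$. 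Diagonalizing each hyperbolic plane $2D(u_i w_i) = \tfrac{D}{2}\big((u_i+w_i)^2 - (u_i-w_i)^2\big)$ shows the form has exactly two positive and two negative eigenvalues, i.e. signature $(2,2)$, independently of the sign of $D$ and of the values of $A,B,C$.

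I would then remark that this diagonalization is smooth and defined on the whole coordinate chart (the only denominator is $D$, which is assumed nonvanishing), so the signature is globally constant on $T^*M$ and equals $(2,2)$. An even shorter alternative, worth mentioning as a sanity check, is to evaluate the characteristic polynomial or the leading principal minors at the point where $A=B=C=0$: there $G_\alpha$ is block-antidiagonal $\begin{psmallmatrix} 0 & D I_2 \\ D I_2 & 0\end{psmallmatrix}$, manifestly of signature $(2,2)$, and then appeal to continuity of eigenvalues together with $\det G_\alpha = D^4\neq 0$ (which forbids any eigenvalue from crossing zero) to propagate this to all values of $A,B,C$. The main — and really only — obstacle is bookkeeping: making sure the change of basis is the same at every point and genuinely smooth, which it is precisely because of the standing hypothesis $D \neq 0$; there is no analytic difficulty here, only the need to phrase the linear algebra cleanly.
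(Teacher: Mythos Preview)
Your argument is correct, but the paper's proof is considerably shorter and takes a different, more conceptual route. Rather than performing an explicit change of basis, the paper simply observes from the lower-right $2\times 2$ zero block of $G_\alpha$ that $\partial_p,\partial_q$ span a $2$-dimensional totally isotropic subspace; since the ambient space is $4$-dimensional and the form is nondegenerate (because $\det G_\alpha = D^4 \neq 0$), a standard fact about symmetric bilinear forms (the Witt index cannot exceed $\min(p,q)$ for signature $(p,q)$) forces the signature to be exactly $(2,2)$. Your completion-of-squares approach has the advantage of producing an explicit smooth diagonalizing frame, which could be useful if one later needed an orthonormal coframe or wanted to compute curvature in adapted coordinates; the paper's approach buys brevity and avoids any computation, since the isotropic subspace is read off directly from the matrix without dividing by $D$ or manipulating $S$.
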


\begin{proof}
From \eqref{eq: L-R symmetric tensor} we see that the vector fields  $\frac{\partial}{\partial p}, \frac{\partial}{\partial q}$ span a $2$-dimensional totally isotropic subspace. Since $\dim T_p T^*M = 4$, the signature of $g_\alpha$ is necessarily $\left(2,2 \right)$.
\end{proof}

%%%%%%%%%%%%%%%%%%%%%%%%%%%%%%%%%%%
\subsection{L-R metric and Ricci flatness condition}

We search for the conditions on $\alpha$ so that the L-R metric \eqref{eq: L-R def} satisfies the Ricci flatness condition
\begin{align}\label{eq: Ricci flat condition}
R_{ij} = 0 \ . 
\end{align}
The Ricci curvature $R_{ij}$ and the Ricci scalar $R$ of $g_\alpha$ are, in general, complicated expressions with the second order derivatives of the coefficients $A, B, \ldots$ We focused on the special case $A=B=C=0$. This means that $\alpha$ reduces to  
    \begin{align}\label{eq: alpha with A=B=C=0}
        \alpha = D \D p \wedge \D q + E \D x \wedge \D y \ , 
    \end{align}
and the corresponding Monge-Ampère equation is
    \begin{align*}
        D \det \operatorname{Hess}(f) = - E \ .
    \end{align*}
Although we have restricted our considerations significantly by this choice of $A,B,C$, the above M-A equation has interesting properties. For example, it naturally emerges in the context of incompressible fluid dynamics, and is related to rich geometric structures \cite{kushner_lychagin_rubtsov_2006, Kosmann-Schwarzbach2010, VolRoul2015, Rubtsov2019}. Moreover, the pullback of the L-R metric yields a Hessian structure on $M$, which will be further discussed in the next section. We proceed with the following lemma, which will be used in the subsequent result. 

%%%%%%%%%%%%%%%%%%%%%%%%%%%%%%%%%%
\begin{lemma}\label{lemma: Ricci scalar for A,B,C = 0}
Let $g_\alpha $ be the L-R metric with $A = B = C = 0$. Then the Ricci scalar is
    \begin{align}\label{eq: Ricci scalar for  A,B,C = 0}
        R=\frac{3}{D^3}\left(-D_qD_y+2DD_{yq}-D_pD_x+2DD_{xp}\right) \ . 
    \end{align}
\end{lemma}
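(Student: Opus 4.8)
The plan is to compute the Ricci scalar directly from the metric
$g_\alpha$, which under the assumption $A=B=C=0$ reduces (via \eqref{eq: L-R symmetric tensor}) to the explicit matrix
\[
G_\alpha=\begin{pmatrix} 0 & 0 & D & 0\\ 0 & 0 & 0 & D\\ D & 0 & 0 & 0\\ 0 & D & 0 & 0\end{pmatrix},
\qquad
G_\alpha^{-1}=\frac{1}{D}\begin{pmatrix} 0 & 0 & 1 & 0\\ 0 & 0 & 0 & 1\\ 1 & 0 & 0 & 0\\ 0 & 1 & 0 & 0\end{pmatrix}.
\]
With coordinates $(x,y,p,q)$ this is a metric whose only nonconstant ingredient is the single function $D\in C^\infty(T^*M)$ (recall $E$ does not enter $g_\alpha$). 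First I would record the nonvanishing partial derivatives of $G_\alpha$: the only entries that vary are the four slots containing $D$, and they depend on all four coordinates. So $\partial_k (G_\alpha)_{ij}$ is a sum of terms $D_x, D_y, D_p, D_q$ sitting in the off-diagonal block. Feeding these into the Christoffel symbols $\Gamma^k_{ij}=\tfrac12 g^{k\ell}(\partial_i g_{j\ell}+\partial_j g_{i\ell}-\partial_\ell g_{ij})$, one gets a manageable list (each $\Gamma$ is a first derivative of $D$ divided by $D$), and then $R_{ij}=\partial_k\Gamma^k_{ij}-\partial_j\Gamma^k_{ik}+\Gamma^k_{k\ell}\Gamma^\ell_{ij}-\Gamma^k_{j\ell}\Gamma^\ell_{ik}$, and finally $R=g^{ij}R_{ij}$.

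The key structural observation that makes the answer come out so clean is the symmetry of the problem under the swap $(x,p)\leftrightarrow(y,q)$: the metric $G_\alpha$ is invariant under simultaneously exchanging the $x$-row/column with the $y$-row/column and the $p$-row/column with the $q$-row/column, so the Ricci scalar must be invariant under $D\mapsto D$ with $(x,p)\leftrightarrow(y,q)$. This predicts that $R$ is a sum of a "$(p,x)$-piece" and a "$(q,y)$-piece" of identical shape, which is exactly the structure of \eqref{eq: Ricci scalar for  A,B,C = 0}: the term $-D_pD_x+2DD_{xp}$ and its mirror $-D_qD_y+2DD_{yq}$. I would exploit this to cut the computation roughly in half — compute the contribution of one block and symmetrize.

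The main obstacle is simply the bookkeeping: although each individual Christoffel symbol is small, the quadratic terms $\Gamma\Gamma$ in the Ricci tensor produce many cross terms mixing $D_x,D_y,D_p,D_q$ and $D/D$ factors, and one must be careful that all the second-derivative cross terms except $D_{xp}$ and $D_{yq}$ cancel (in particular $D_{xx},D_{yy},D_{pp},D_{qq},D_{xy},D_{pq}$ should all drop out of $R$, as should the mixed first-derivative products like $D_xD_q$), and that the surviving coefficients assemble into the overall factor $3/D^3$. This is best organized by separating $R$ into the part linear in second derivatives of $D$ and the part quadratic in first derivatives of $D$, verifying each part has the claimed form, and then combining. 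No step is conceptually hard; the only real risk is an arithmetic slip, which the $(x,p)\leftrightarrow(y,q)$ symmetry and a dimensional/scaling check ($R$ must scale like $(\text{length})^{-2}$, i.e. have net one factor of $1/D$ beyond the two derivatives, consistent with $1/D^3$ times two derivatives on $D$) help guard against.
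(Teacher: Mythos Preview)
Your proposal is correct and is essentially the same approach as the paper's own proof, which simply states that the result follows by direct computation (noting $D\neq 0$ so the factor $3/D^3$ is well-defined). Your additional organizational remarks---exploiting the $(x,p)\leftrightarrow(y,q)$ symmetry to halve the work, and using the scaling $D\mapsto\lambda D$ as a sanity check---are sensible bookkeeping aids but do not constitute a different method.
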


\begin{proof}
The proof is given by a direct computation. Note that the factor $\frac{3}{D^3}$ is well-defined, since $G_\alpha$ corresponds to a metric only if $D \neq 0$, due to lemma \eqref{lemma: basic info about L-R metric}. 
\end{proof}

\begin{proposition}\label{proposition 1}
Let $\alpha \in \Omega^2 (T^*M)$ be given by \eqref{eq: alpha with A=B=C=0}, where $E \in C^\infty (T^*M)$ is arbitrary. Then $g_\alpha$ is a Ricci flat pseudo-Riemannian metric if and only if, 
    \begin{align}\label{einst}
        D(x,y,p,q)=\left(c_1+c_2x+c_3y+c_4p+c_5q+c_6(xp+yq)\right)^{-2} \ ,
    \end{align}
where the constants $c_i \in \mathbb{R}$, $i = 1,2 ,\ldots, 6$, satisfy
    \begin{align}\label{cond}
        c_2c_4+c_3c_5-c_1c_6=0 \ .
    \end{align}
\end{proposition}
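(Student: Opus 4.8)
The plan is to start from the Ricci scalar formula in Lemma~\ref{lemma: Ricci scalar for A,B,C = 0}, but observe that vanishing of the scalar curvature is far weaker than Ricci flatness, so the real work is with the full Ricci tensor $R_{ij}$. Since $A=B=C=0$, the metric $G_\alpha$ depends only on $D$, so every component $R_{ij}$ is a rational expression in $D$ and its first and second partial derivatives (with denominator a power of $D$, which is legitimate since $D\neq 0$). The first step is therefore to write out the system $R_{ij}=0$ explicitly as a PDE system for the single function $D(x,y,p,q)$. I expect the structure of $G_\alpha$ — with the lower-right $2\times2$ block vanishing and $D$ appearing as the off-diagonal block — to make the inverse metric and Christoffel symbols manageable, so that the $R_{ij}=0$ equations organize into a small number of genuinely independent scalar PDEs.

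Next I would look for a substitution that linearizes the system. The form of the answer \eqref{einst}, $D = u^{-2}$ with $u$ affine-plus-$(xp+yq)$, strongly suggests setting $u := D^{-1/2}$ (well defined up to sign wherever $D>0$; one treats $D<0$ analogously or notes the formula is sign-agnostic since $D$ enters as $u^{-2}$). The hope — and I would verify this by direct substitution — is that the nonlinear PDEs in $D$ collapse into \emph{linear} second-order PDEs in $u$, of the form ``certain second derivatives of $u$ vanish.'' Concretely I would expect to get $u_{xx}=u_{yy}=u_{pp}=u_{qq}=0$, $u_{xy}=u_{pq}=u_{xq}=u_{yp}=0$, and a coupling forcing $u_{xp}=u_{yq}$ with the mixed second derivative being a constant; integrating these gives exactly $u = c_1+c_2x+c_3y+c_4p+c_5q+c_6(xp+yq)$, where the single constant $c_6$ is the common value of $u_{xp}=u_{yq}$ and all other pure/mixed second derivatives are zero. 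This explains both the shape of $D$ and why only the combination $xp+yq$ (not $xq$ or $yp$) appears.

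Then I would isolate where the constraint \eqref{cond} comes from. After imposing that $u$ has the stated affine-plus-$(xp+yq)$ form, not every choice of the six constants will kill \emph{all} components of $R_{ij}$; substituting this $u$ back into the remaining Ricci-tensor equations (the ones not already used to pin down the second-derivative pattern) should leave a single polynomial identity in $c_1,\dots,c_6$, which I expect to be precisely $c_2c_4+c_3c_5-c_1c_6=0$. Geometrically this is the Plücker relation, consistent with the paper's announced interpretation via the Plücker embedding, so as a sanity check I would confirm that the quadric \eqref{cond} is indeed the Plücker quadric in $\mathbb{P}^5$. For the converse direction, one simply plugs $D=u^{-2}$ with $u$ as in \eqref{einst} and $c_i$ satisfying \eqref{cond} back into the $R_{ij}$ expressions and checks they all vanish — a finite, if tedious, verification best delegated to a computer algebra system.

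The main obstacle I anticipate is not conceptual but organizational: extracting from the (a priori sixteen, by symmetry ten) components $R_{ij}=0$ the correct minimal set of independent PDEs, and guessing the linearizing substitution $u=D^{-1/2}$ before doing the computation. Once that substitution is in hand, the rest is essentially integrating a decoupled linear system and reading off one compatibility condition; without it, one faces a messy nonlinear second-order system with little visible structure. A secondary subtlety is the global/sign issue in defining $u=D^{-1/2}$, but since $D$ re-enters only through $u^2$ this is harmless and can be dispatched in a line.
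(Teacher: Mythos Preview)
Your plan is correct and will go through essentially as you describe. Indeed, with $D=u^{-2}$ the eight equations $3D_iD_j-2DD_{ij}=0$ (for $i=j\in\{x,y,p,q\}$ and $(i,j)\in\{(x,y),(x,q),(p,y),(p,q)\}$) become exactly $4u^{-5}u_{ij}=0$, so those eight second derivatives of $u$ vanish; the two remaining Ricci equations then give $u_{xp}=u_{yq}$ and, after substituting the resulting affine-plus-$c_6(xp+yq)$ form of $u$, the single relation $u_xu_p+u_yu_q=c_6\,u$, which is precisely $c_2c_4+c_3c_5-c_1c_6=0$.

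The paper's own proof reaches the same destination by a slightly different route: rather than introducing $u=D^{-1/2}$ globally, it takes the single equation $3D_x^2-2DD_{xx}=0$, divides by $DD_x$ to get $\tfrac{3}{2}(\ln D)_x=(\ln D_x)_x$, integrates and separates to obtain $D=\bigl(c_1(y,p,q)+x\,c_2(y,p,q)\bigr)^{-2}$, and then feeds this into the remaining equations one variable at a time until the full form \eqref{einst} emerges; the last two coupled equations then yield \eqref{cond}. Your substitution is cleaner and more symmetric --- it linearizes eight of the ten equations at once and makes the constancy of $u_{xp}=u_{yq}$ a transparent consequence of the vanishing of the other second partials --- whereas the paper's iterative integration avoids having to guess the substitution but hides the underlying linearity. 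Both arguments are equally rigorous; yours is arguably easier to present and to check.
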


\begin{proof}
The Ricci curvature is a symmetric tensor and thus, in general, has $10$ independent components. We will split the set of equations corresponding to $R_{ij} = 0, 1 \leq i,j \leq 4 $ into three subsets. The first subset contains four equations
    \begin{align}\label{intermediate step 2}
        3D_i^2-2DD_{ii} & = 0,\quad i \in \{x,y,p,q\} \ , 
    \end{align}
as well as the second subset
    \begin{align}\label{intermediate step 3}
        \begin{split}
        3D_xD_y-2DD_{xy}&=0 \ , \\
        3D_xD_q-2DD_{xq}&=0 \ , \\
        3D_pD_y-2DD_{py}&=0 \ , \\
        3D_pD_q-2DD_{pq}&=0 \ .
        \end{split}
    \end{align}
The last two equations are
    \begin{align}\label{intermediate step 4}
        \begin{split}
        -2DD_{yq}+3D_xD_p-4DD_{xp}&=0 \ , \\
        -2DD_{xp}+3D_yD_q-4DD_{yq}&=0   \ .    
        \end{split} 
    \end{align}
Now we use the following scheme for looking for solutions of the above system of PDEs. We start with the first equation $3D_x^2-2DD_{xx}=0$, which can be easily transformed by multiplying both sides of the equation with the factor $\frac{1}{D D_x}$ 
    \begin{align*}
        \frac{3}{2}\frac{D_x}{D}=\frac{D_{xx}}{D_x} \ , 
    \end{align*}
thus obtaining an equation, which can be integrated to a separable equation
    \begin{align*}
        \frac{3}{2}\ln D = \ln D_x + c \ . 
    \end{align*}
The solution to the last equation is
    \begin{align*}
        D=\frac{1}{(c_1(y,p,q)+xc_2(y,p,q))^2} \ , 
    \end{align*}
where $c_1(y,p,q),c_2(y,p,q)$ are unknown functions. We use the remaining equations in \eqref{intermediate step 2} and \eqref{intermediate step 3} to fix the dependencies of $c_1(y,p,q)$ and $c_2(y,p,q)$ on the variables $y,p,q$. This leads to 
    \begin{equation*}
        D(x,y,p,q)=\left(c_1+c_2x+c_3y+c_4p+c_5q+c_6(xp+yq)\right)^{-2} \ .
    \end{equation*}
Plugging this in the remaining two equations \eqref{intermediate step 4}, we get the condition \eqref{cond}, which finishes the proof.    
\end{proof}

%%%%%%%%%%%%%%%%%%%%%%%%%%%%%%%%%%%%
\subsubsection{Plücker embedding}

Following \cite{Lychagin2019}, let $V$ be a real vector space, $\dim V = 4$. Denote by $\operatorname{Gr} (2,V)$ the Grassmanian of $2D$ subspaces in $V$, and by $\mathbb{P}(\Lambda^2 V) $ the projectivization of the space of exterior $2$-forms on $V$. Let $p_{12}, p_{13}, p_{14}, p_{23}, p_{24}, p_{34}$ be coordinates on $\mathbb{P}(\Lambda^2 V) $. The Plücker embedding is a map 
    \begin{align}\label{Plucker map}
        \operatorname{Gr} (2,V) \to \mathbb{P}(\Lambda^2 V)
    \end{align}
with image given by the equation 
    \begin{align}\label{eq: Plucker embedding} 
        p_{12}p_{34} - p_{13}p_{24} + p_{14}p_{23} = 0 \ .
    \end{align}
Now we are ready to formulate a corollary of proposition \ref{proposition 1}, which describes a link between the solutions $g_\alpha$ and points on quadric in the 5D real projective space.

\begin{corollary}\label{Plucker embedding corollary}
There is a correspondence between the set of metrics $g_\alpha \in  S^2(T^*M)$, which satisfy \eqref{einst}, \eqref{cond}, and the quadric in $\mathbb{R}P^5$ given by the image of the Plücker embedding \eqref{Plucker map}. The condition \eqref{cond} is equivalent to vanishing of the scalar curvature of $g_\alpha$. 
\end{corollary}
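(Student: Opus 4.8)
The plan is to obtain the statement almost directly from Proposition~\ref{proposition 1} and Lemma~\ref{lemma: Ricci scalar for A,B,C = 0}, the only real content being the identification of the quadratic constraint \eqref{cond} with the Plücker relation \eqref{eq: Plucker embedding}. First I would recall that for $\alpha$ of the form \eqref{eq: alpha with A=B=C=0} the matrix \eqref{eq: L-R symmetric tensor} shows that $g_\alpha$ is completely determined by the single function $D$ (and does not involve $E$), and that Proposition~\ref{proposition 1} characterizes the Ricci-flat ones as exactly those with $D$ of the form \eqref{einst}. Since the functions $1,x,y,p,q,xp+yq$ are linearly independent on $T^*M$, such a $D$ determines the six-tuple $(c_1,\dots,c_6)$ up to an overall sign, so the set of Ricci-flat metrics $g_\alpha$ under consideration is parametrized by the tuples $(c_1,\dots,c_6)\in\mathbb{R}^6$ subject to \eqref{cond}; as \eqref{cond} is homogeneous quadratic in the $c_i$, it cuts out a quadric cone in $\mathbb{R}^6$, hence a projective quadric in $\mathbb{R}P^5$.

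Next I would exhibit the linear change of coordinates identifying this $\mathbb{R}^6$ with $\Lambda^2V$: setting $p_{14}=c_1$, $p_{12}=c_2$, $p_{13}=c_3$, $p_{34}=c_4$, $p_{24}=-c_5$, $p_{23}=-c_6$ turns the left-hand side of \eqref{cond} into $p_{12}p_{34}-p_{13}p_{24}+p_{14}p_{23}$, i.e. exactly the left-hand side of \eqref{eq: Plucker embedding}. Since (as recalled just before the statement) the image of the Plücker embedding \eqref{Plucker map} is precisely the quadric $\{p_{12}p_{34}-p_{13}p_{24}+p_{14}p_{23}=0\}$, this yields the asserted correspondence between the Ricci-flat metrics $g_\alpha$ of Proposition~\ref{proposition 1} and the points of the Plücker quadric in $\mathbb{R}P^5$.

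For the second assertion I would substitute \eqref{einst} into the formula of Lemma~\ref{lemma: Ricci scalar for A,B,C = 0}. Writing $L=c_1+c_2x+c_3y+c_4p+c_5q+c_6(xp+yq)$, so $D=L^{-2}$, one has $L_x=c_2+c_6p$, $L_y=c_3+c_6q$, $L_p=c_4+c_6x$, $L_q=c_5+c_6y$, $L_{xp}=L_{yq}=c_6$ and all other second derivatives of $L$ zero; a short computation gives $-D_qD_y+2DD_{yq}-D_pD_x+2DD_{xp}=8L^{-6}\bigl(L_pL_x+L_qL_y-c_6L\bigr)$, and in $L_pL_x+L_qL_y-c_6L$ every term involving $x,y,p,q$ cancels, leaving the constant $c_2c_4+c_3c_5-c_1c_6$. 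Hence $R=24\,(c_2c_4+c_3c_5-c_1c_6)$, which vanishes precisely when \eqref{cond} holds.

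There is no deep obstacle; the result is bookkeeping on top of Proposition~\ref{proposition 1} and Lemma~\ref{lemma: Ricci scalar for A,B,C = 0}. The points that need a little care are: fixing the sign conventions in the relabeling of the $c_i$ as Plücker coordinates so that \eqref{cond} matches \eqref{eq: Plucker embedding} exactly, and being precise about the word \emph{correspondence} — strictly, $(c_1,\dots,c_6)$ and $\lambda(c_1,\dots,c_6)$ produce metrics differing by the constant conformal factor $\lambda^{-2}$, so it is the projectivized coefficient data (equivalently, the family of such $D$'s considered up to constant rescaling) that is in bijection with the Plücker quadric, and I would state this explicitly.
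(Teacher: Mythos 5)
Your proof is correct and follows essentially the same route as the paper: a linear relabeling of $(c_1,\dots,c_6)$ as Plücker coordinates that turns \eqref{cond} into \eqref{eq: Plucker embedding}, together with substitution of \eqref{einst} into Lemma \ref{lemma: Ricci scalar for A,B,C = 0} to show that $R$ is a constant multiple of $c_2c_4+c_3c_5-c_1c_6$. Two minor remarks: your sign conventions for the $p_{ij}$ differ from the paper's (both make \eqref{cond} match the Plücker relation), and your explicit computation yields $R=+24(c_2c_4+c_3c_5-c_1c_6)$ where the paper asserts $-24$ --- the discrepancy is immaterial for the vanishing statement, and your observation that the correspondence is really with the coefficient data up to constant rescaling is a legitimate precision that the paper glosses over.
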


\begin{proof}
 Define 
    \begin{align*}
        c_1 = - p_{14} \ , \quad c_2 = p_{12} \ ,  \quad c_3 = - p_{13} \ ,  \quad c_4 = p_{34} \ ,  \quad c_5 = p_{24} \ ,  \quad c_6 = p_{23} \ .
    \end{align*}
Then \eqref{eq: Plucker embedding} is satisfied only if \eqref{cond} holds. If $g_\alpha$ is given by \eqref{einst}, then its scalar curvature is
    \begin{align*}
        R = -24(c_2c_4+c_3c_5-c_1c_6) \ .
    \end{align*}
Thus \eqref{cond} holds only if $R = 0$.
\end{proof}

% We use the remaining equations to fix the dependencies of $c_1(y,p,q)$ and $c_2(y,p,q)$ on the variables $y,p,q$.

%These two equations give $\frac{\partial b}{\partial y}=0$ and $\frac{\partial^2 a}{\partial y^2}=0$, meaning at this step the most general form of $D$ satisfiying the three equations is

%\begin{align*}
%D=\frac{1}{(a(p,q)+xb(p,q)+yc(p,q))^2}
%\end{align*}

%In the third step we want to satisfy the equations $3D_x D_q -2D D_{xq}=0$ and $3D_q^2-2DD_{qq}=0$ which give the following conditions
%\begin{align*}
%\frac{\partial b}{\partial q}=0,\quad \frac{\partial^2 a}{\partial q^2}=0,\quad \frac{\partial^2 c}{\partial q^2}=0
%\end{align*}
%Which imply $b=b(p)$, $a=a(p)+d(p)q$ and $c=c(p)+e(p)q$. At this point we have

%\begin{align*}
%D=\frac{1}{(a(p)+d(p)q+xb(p)+yc(p)+yqe(p))^2}
%\end{align*}

%And five remaining equations to fix five unknown functions.

%The three equations (not the last 2) give $c^\prime=e^\prime=0$ and $a''=b''=d''=0$

%Going into the last step we have
%\begin{align*}
%D=\frac{1}{(a+f(p)+dq+xb+xg(p)+yc+yqe)^2}
%\end{align*}
%And the solving the last two equations we get come conditions on the coefficients, we write the final result in a nicer form

%\alert{Tady se MA rovnice nepoužijí vůbec.}
%Solves the vacuum Einstein equations $R_{ij}=0$ provided that $c_2c_4+c_3c_5-c_1c_6=0$. We believe this is the most general solution. The scalar curvature is given just by

%%%%%%%%%%%%%%%%%%%%%%%%%%%%%%%%%%%%%%%%%%%%%%%%%%%%%%%%%%%%%%%%%
\section{The pullback metric}
%%%%%%%%%%%%%%%%%%%%%%%%%%%%%%%%%%%%%%%%%%%%%%%%%%%%%%%%%%%%%%%%%

Given a section of the cotangent bundle $\D f \colon M \rightarrow T^*M$, we can pullback the metric from $T^*M$ to $M$ and induce a (pseudo-)Riemannian structure $(\D f)^* g_\alpha$ on $M$. In this section we explore the properties of this metric.

%%%%%%%%%%%%%%%%%%%%%%%%%%%%

%\alert{Tady jsem zakomentoval ten example protože nakonec stejne pocitame s tim obecnym pripadem}
%\noindent \textbf{Example.} Consider the equation \eqref{eq: M-A equation in coordinates} with $A,B,C = 0$ and $D,E \in C^\infty, D \neq 0$. This amounts to the following M-A equation
%    \begin{align*}
%        D \det \operatorname{Hess} (\phi) = E \ , 
%    \end{align*}
%Then the pullback of the corresponding Lychagin-Rubtsov metric and its inverse is  
%\begin{align*}
%f^*g_\alpha=2D(x,y)Hess(f)
%\end{align*}
%\begin{align*}
%(f^*g_\alpha)^{-1}=\frac{1}{4DE}
%\begin{pmatrix}
%f_{yy} & -f_{xy}\\
%f_{xy}& f_{xx}
%\end{pmatrix}
%\end{align*}
%%%%%%%%%%%%%%%%%%%%%%%%%%%%
%\noindent \textbf{Example.} For the general case $D \neq 0$. 

%%%%%%%%%%%%%%%%%%%%%%%%%%%%%%%%%%%
\subsection{General case}
%%%%%%%%%%%%%%%%%%%%%%%%%%%%%%%%%%%

We start with the matrix  of the pullback metric $(\D f)^*g_\alpha$ in canonical coordinates, which we will denote $G^*_\alpha$
    \begin{align}\label{eq: general pullback metric}
        G^*_\alpha =
        \begin{pmatrix}
        2C & -2B\\
        -2B& 2A
        \end{pmatrix}
        +
        2D \operatorname{Hess}(f) \ .
    \end{align}
We remind the reader that we use the same notation for the functions $A,B,C,D \in C^\infty(T^*M)$ (i.e. depending on $x,y,p,q$) as we do for the precompositions with the section of $T^*M \to M$ (i.e. depending on $x,y,f_x, f_y$). The determinant is
    \begin{align}\label{eq: determinant of the pullback metric}
        \det G^*_\alpha  = 4(AC-B^2)+4D\left(Af_{xx}+2Bf_{xy}+Cf_{yy}+Df_{xx}f_{yy}-Df^2_{xy}\right) \ .
    \end{align}
We observe that the non-degeneracy of $g_\alpha$, which was governed by the condition $D \neq 0$, is not directly related to non-degeneracy of $(\D f)^*g_\alpha$, which is much more complicated. This allows for all four situations of degeneracy/non-degeneracy of the pair $\left( g_\alpha, (\D f)^*g_\alpha \right)$. 

%%%%%%%%%%%%%%%%%%%%%%%%%%%%%%%%%%%
\subsection{Special choices of $A, \ldots, E$}
%%%%%%%%%%%%%%%%%%%%%%%%%%%%%%%%%%%
We will now discuss certain choices of the coefficients $A, \ldots, E$, leading to further simplifications and interesting properties of $(\D f)^*g_\alpha$.

%%%%%%%%%%%%%%%%%%%%%%%%%%%%%%%%%%%
\subsubsection{Hessian structures}
%%%%%%%%%%%%%%%%%%%%%%%%%%%%%%%%%%%

Suppose that $A=B=C=0$ and $D=\frac{1}{2}$. Then the pullback metric is of the form
    \begin{align}
        G^*_\alpha =\operatorname{Hess}(f) \ , 
    \end{align}
which is called \textit{a Hessian structure} on $M$. The theory of Hessian structures is well studied, for example, see \cite{hirohiko2007}, where the Riemannian geometry of the Hessian structures is investigated. Interesting properties of these structures are also described, for example, in \cite{AMARI20141, doi:10.1142/S0129167X04002338, hirohiko2007}. 

In \cite{hirohiko2007}, the author defines so-called first and second Kozsul forms, $a_i, b_{ij}$, of the Hessian structure, which are derived from the Christoffel symbols of the corresponding metric as follows
    \begin{align}\label{eq: first and second Koszul forms}
        a_i = \Gamma^k_{ki},\qquad b_{ij}=\partial_j \alpha_i \ .
    \end{align}

We can employ the condition that the M-A equation $ (\D f)^* \alpha = 0$ is satisfied, which yields
    \begin{align*}
        \det G^*_\alpha  = - E \ . 
    \end{align*}
In this case, the first and second Kozsul forms simplify to the following nice form
    \begin{align*}
        a_i & = \frac{1}{2}\frac{\partial \ln \det  G^*_\alpha}{\partial x^i}
        =
        \frac{1}{2}\frac{\partial \ln |E|}{\partial x^i} \ , \\
        b_{ij} & = \frac{1}{2}\frac{\partial^2 \ln |E|}{\partial x^i \partial x^j}=\frac{1}{2}\operatorname{Hess}(\ln |E|) \ .
    \end{align*}

Especially the second Koszul form is of great importance. Following \cite{hirohiko2007}, given a Hessian structure on $M$, one can define a Kähler structure on $TM$. Let $(x^i, \xi^i)$ be a coordinate system on $TM$ and $z^j = x^j + i \xi^j$. Then the Hessian structure is 
    \begin{align*}
        g^T=(g_{ij}\circ \pi) \D z^i \D \overline{z}^j \ .
    \end{align*}
Then the Ricci tensor of $g^T$ is given by the second Koszul form
    \begin{align*}
        R^T_{ij}=-\frac{1}{2} b_{ij}\circ \pi \ .
    \end{align*}
The authors of this paper are interested in the relationship between the complex structure given by the above construction, and the complex structure naturally associated with M-A structures with negative Pfaffian (see \eqref{eq: Pfaffian} for the definition of the notion).

%%%%%%%%%%%%%%%%%%%%%%
\subsubsection{Deformations of the Hessian structure}
%%%%%%%%%%%%%%%%%%%%%%

Let us now suppose only $D=\frac{1}{2}$, then 
    \begin{align*}
        G^*_\alpha =
        \begin{pmatrix}
        2C & -2B\\
        -2B& 2A
        \end{pmatrix}
        +
        \operatorname{Hess}(f) \ .
    \end{align*}
It is possible to choose suitable functions $A,B,C$ such that the matrix 
    \begin{align*}
        \begin{pmatrix}
        2C & -2B\\
        -2B & 2A
        \end{pmatrix}
    \end{align*}
is a Hessian matrix of some function $\epsilon g$, where $\epsilon > 0$ is a scalar. Such a choice gives rise to a \textit{deformation of the Hessian structure}, since
    \begin{align*}
       G^*_\alpha = \operatorname{Hess}(f) + \epsilon \operatorname{Hess}(g)=\operatorname{Hess}(f+\epsilon g) \ . 
    \end{align*}
Notably, all the results of the previous paragraphs holds.

%%%%%%%%%%%%%%%%%%%%%%
\subsubsection{Pfaffian and non-degeneracy}
%%%%%%%%%%%%%%%%%%%%%%

So far we did not need to assume that the M-A equation related to the M-A structure $(\Omega, \alpha)$ is satisfied by $f$. Nevertheless, if we assume that the M-A equation $(\D f)^* \alpha = 0$ holds, then it allows us to significantly simplify the expression \eqref{eq: determinant of the pullback metric}. 

\begin{proposition}
Let $f \in C^\infty(M)$ and consider a M-A structure $(\Omega, \alpha) \in \Omega^2(T^*M) \times \Omega^2(T^*M)$. Suppose that $f$ satisfies the corresponding M-A equation $(\D f)^* \alpha = 0$. Then $(\D f)^*g_\alpha$ is a metric on $M$ only if $\operatorname{Pf}(\alpha) \neq 0$, which is equivalent to non-degeneracy of the M-A structure $(\Omega, \alpha)$. Moreover, the eigenvalues of the matrix of $(\D f)^*g_\alpha$ in canonical coordinates are
    \begin{align}\label{eq: eigenvalues for the special pullback metric}
        \lambda_{1,2}=C+A+D(f_{xx}+f_{yy})\pm 2\sqrt{\operatorname{Pf}(\alpha)} \ .
    \end{align}
\end{proposition}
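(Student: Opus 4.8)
The plan is to reduce both claims to two elementary invariants of the $2\times 2$ symmetric matrix $G^*_\alpha$ of \eqref{eq: general pullback metric}, its determinant and its trace, and to identify the determinant with the Pfaffian of $\alpha$.

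I would first compute $\det G^*_\alpha$ starting from \eqref{eq: determinant of the pullback metric}. Imposing the Monge--Ampère equation $(\D f)^*\alpha=0$ in the coordinate form \eqref{eq: M-A equation in coordinates} replaces $Af_{xx}+2Bf_{xy}+Cf_{yy}+Df_{xx}f_{yy}-Df_{xy}^{2}$ by $-E$, so that $\det G^*_\alpha = 4(AC-B^{2}-DE)$. Independently I would compute $\operatorname{Pf}(\alpha)$ directly from its defining relation \eqref{eq: Pfaffian}: expanding $\alpha\wedge\alpha$ for $\alpha$ as in \eqref{def: general alpha for 2D SMAE}, only the three complementary pairings of basis $2$-forms, $(\D p\wedge\D y,\,\D x\wedge\D q)$, $(\D x\wedge\D p,\,\D y\wedge\D q)$ and $(\D p\wedge\D q,\,\D x\wedge\D y)$, contribute a top form, and comparing the result with $\Omega\wedge\Omega=2\,\D x\wedge\D p\wedge\D y\wedge\D q$ yields $\operatorname{Pf}(\alpha)=AC-B^{2}-DE$. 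Hence $\det G^*_\alpha = 4\operatorname{Pf}(\alpha)$. Since a symmetric bilinear form is a (pseudo-)metric precisely when its matrix is invertible, this identity shows that $(\D f)^*g_\alpha$ is a metric on $M$ if and only if $\operatorname{Pf}(\alpha)\neq 0$, and by the definition recalled around \eqref{eq: Pfaffian} this is exactly non-degeneracy of the M-A structure $(\Omega,\alpha)$; in particular the asserted ``only if'' follows.

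For the eigenvalues I would read off $\operatorname{tr}G^*_\alpha=2C+2Df_{xx}+2A+2Df_{yy}=2\bigl(A+C+D(f_{xx}+f_{yy})\bigr)$ from \eqref{eq: general pullback metric}, so that the characteristic polynomial of $G^*_\alpha$ is $\lambda^{2}-2\bigl(A+C+D(f_{xx}+f_{yy})\bigr)\lambda+4\operatorname{Pf}(\alpha)$; solving this quadratic and re-expressing the discriminant through $\det G^*_\alpha=4\operatorname{Pf}(\alpha)$ gives \eqref{eq: eigenvalues for the special pullback metric}. The step I expect to require the most care is the Pfaffian computation, specifically the orientation bookkeeping in reordering $\D x\wedge\D p\wedge\D y\wedge\D q$ together with the normalization of $\pi^{*}\operatorname{vol}$ implicit in \eqref{eq: L-R def}: these must be tracked so that the identity emerges with the correct sign $\det G^*_\alpha=+4\operatorname{Pf}(\alpha)$, which in turn fixes the signature of $(\D f)^*g_\alpha$ (definite versus split) in accordance with the elliptic/hyperbolic dichotomy recalled in the introduction.
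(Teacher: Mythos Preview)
Your approach is essentially identical to the paper's: substitute the Monge--Amp\`ere relation into \eqref{eq: determinant of the pullback metric}, identify $\det G^*_\alpha=4\operatorname{Pf}(\alpha)$ via a direct wedge computation, and then pass to the characteristic polynomial using trace and determinant. On the determinant/Pfaffian part your argument is correct and matches the paper line for line.

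There is, however, a genuine gap in the eigenvalue step---and it is the same gap the paper's proof glosses over. From your (correct) characteristic polynomial
\[
\lambda^{2}-2S\lambda+4\operatorname{Pf}(\alpha)=0,\qquad S:=A+C+D(f_{xx}+f_{yy}),
\]
the quadratic formula gives
\[
\lambda_{1,2}=S\pm\sqrt{S^{2}-4\operatorname{Pf}(\alpha)},
\]
not $S\pm 2\sqrt{\operatorname{Pf}(\alpha)}$ as stated in \eqref{eq: eigenvalues for the special pullback metric}. The two expressions agree only when $S^{2}=8\operatorname{Pf}(\alpha)$, which is not a consequence of the M-A equation. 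A quick check: with $A=C=1$, $B=D=E=0$ the M-A equation is the Laplace equation, $G^*_\alpha=2I$, and both eigenvalues equal $2$; the displayed formula \eqref{eq: eigenvalues for the special pullback metric} would instead give $2\pm 2$. So your sentence ``solving this quadratic \ldots\ gives \eqref{eq: eigenvalues for the special pullback metric}'' does not go through; either the discriminant must be left as $S^{2}-4\operatorname{Pf}(\alpha)$, or an additional hypothesis is needed to reach the stated form.
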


\begin{proof}
A coordinate description of the M-A equation $(\D f)^* \alpha = 0$ is \eqref{eq: M-A equation in coordinates}. Thus, if $f$ satisfies the equation, then 
    \begin{align*}
          A f_{xx} + 2B f_{xy} + C f_{yy} + D \left( f_{xx} f_{yy} - {f_{xy}}^2 \right) = -E \ .    
    \end{align*}
This implies that \eqref{eq: determinant of the pullback metric} becomes 
    \begin{align*}
        \det G^*_\alpha =
        4(AC-B^2)-4DE \ .
    \end{align*}
Using the equation \eqref{eq: Pfaffian} and the coordinate descriptions \eqref{eq: symplectic form}, \eqref{def: general alpha for 2D SMAE} of the M-A structure, one can easily compute $\operatorname{Pf}(\alpha) = -B^2 + AC - DE$. Therefore
    \begin{align*}
       \det G^*_\alpha  = 4 \operatorname{Pf}(\alpha) \ .
    \end{align*}
The eigenvalues are solutions of the following equation
    \begin{align}\label{eq: equation for the eqigenvalues}
        \lambda^2-\lambda\left(2C+2A+2Df_{yy}+2Df_{xx}\right)
    -4(B^2-AC)-4DE=0
    \end{align} 
and their determination is a direct computation. 
\end{proof}

We observe that the signature of $(\D f)^*g_\alpha$ depends on two M-A equations. Firstly, to obtain the previous result, we had to assume that $f$ solves the M-A equation $(\D f)^* \alpha = 0$. Secondly, we see that the eigenvalue equation \eqref{eq: equation for the eqigenvalues} contains the Laplacian expression $2D(f_{yy}+f_{xx})$. But vanishing of this expression amounts to the Laplace equation $ f_{yy}+f_{xx} = 0 $, which, in general, is a different M-A equation then $(\D f)^* \alpha = 0$.

%%%%%%%%%%%%%%%%%%%%%%%%%%%%%%%%%%%%%%%%%%
\section{Conclusions and Outlook}
%%%%%%%%%%%%%%%%%%%%%%%%%%%%%%%%%%%%%%%%%%

Motivated by the results of V. Lychagin et. al. \cite{Lychagin, ASENS_1993_4_26_3_281_0}, and I. Roulstone et. al. \cite{VolRoul2015, RoubRoul2001, roulstone2009kahler}, we were interested in (pseudo-)Riemannian and Hessian structures related to Monge-Ampère structures and the corresponding $2D$ Monge-Ampère equations. More concretely, this paper was focused on the following questions.
    \begin{enumerate}
        \item Are there any non-constant Lychagin-Rubtsov metrics with vanishing Ricci curvature? 
        \item If the answer to the first question is positive, can we classify all such Monge-Ampère structures and the corresponding M-A equations?
        \item What kind of geometry on $M$ yields the pullback of the L-R metric along sections of $T^*M \to M$?
    \end{enumerate}
We have presented partial answers to the above questions in the case of 4D metrics on $T^*M$ and their 2D pullbacks on $M$. 

The first question is positively answered in proposition \ref{proposition 1}. There is a family of non-constant L-R metrics, depending on five real parameters satisfying conditions \eqref{einst}, \eqref{cond}, with $R_{ij} = 0$. Considering the second question, we have found definite answers for the case
    \begin{align}\label{eq: det hess f = e}
        D \det \operatorname{Hess} f = - E \ ,
    \end{align}
This amounts to suppressing three out of five degrees of freedom of a general 2D (symplectic) M-A equation \eqref{def: M-A equation} by choosing $A = B = C = 0$. The answer to the last question is also based on this choice, which naturally leads to 2D Hessian structures on $M$, their deformations, and a subclass of Hessian structures determined by the requirement that $f$ is a solution of the M-A equation \eqref{eq: det hess f = e}. We want to emphasize that only some of our results depend on the requirement that $f$ satisfies certain PDE. The L-R metric and its pullback can be introduced without any mentioning of the correspondence between M-A structures and M-A equations, so \eqref{eq: det hess f = e} (or possibly other M-A equation) is an additional assumption. Aside from the above three questions, we observed an interesting relation between a family of solutions of the vacuum field equations and the Plücker embedding, which is described in corollary \ref{Plucker embedding corollary}. Regarding the future work, we are interested in the following. 
    \begin{itemize}
        \item  Further properties of the curvature tensor of $g_\alpha$ and the relation between the curvature and the corresponding M-A equations.
        \item The Hessian structures determined by \eqref{eq: det hess f = e} and aim at establishing further links between M-A structures and Hessian structures (as well as their deformations) in dimensions greater then $2$. 
        \item Comparison of the complex structure related to the Koszul form with the complex structures associated with M-A equations satisfying $\operatorname{Pf}(\alpha) > 0$. 
    \end{itemize}

%%%%%%%%%%%%%%%%%%%%%%%%%%%%%%%%%%%%%%%%%%%%%%%%%%%%%%%%%%%%%%%%
\section{Acknowledgment}
%%%%%%%%%%%%%%%%%%%%%%%%%%%%%%%%%%%%%%%%%%%%%%%%%%%%%%%%%%%%%%%%

Both authors are grateful for the valuable comments and suggestions of the reviewer. R. Suchánek is grateful to J. Slovák and V. Rubtsov for their discussions and advices, as well as to the Masaryk University and to the University of Angers for their hospitality. The work of R. Suchánek was financially supported under the project GAČR EXPRO GX19-28628X, and project MUNI/A/1092/2021, and partly by the Barrande Fellowship program organized by The French Institute in Prague (IFP) and the Czech Ministry of Education, Youth and Sports (MYES). Authors are also grateful to the organizers of the 42nd Winter School Geometry and Physics for a wonderful event, which enabled the authors to work on this subject in a very inspiring atmosphere.

%%%%%%%%%%%%%%%%%%%%%%%%%%%%%%%%%%%%%%%%%%%%%%%%%%%%%%%%%%%%%%%%%%%%%%%%%%%%%%%%%%%%%%%%%%%%%

%%%%%%%%%%%%%%%%%%%%%%%%%%%%%%%%%
%%%%%%%%%%%%%%%%%%%%%%%%%%%%%%%%%

\bibliographystyle{plain}

\begin{thebibliography}{10}

\bibitem{AMARI20141}
S.~Amari and J.~Armstrong.
\newblock Curvature of hessian manifolds.
\newblock {\em Differential Geometry and its Applications}, 33:1--12, 2014.

\bibitem{VolRoul2015}
B.~Banos, V.~Rubtsov, and I.~Roulstone.
\newblock {Monge--Amp{\`e}re Structures and the Geometry of Incompressible
  Flows}.
\newblock {\em Journal of Physics A: Mathematical and Theoretical}, 49, 10
  2016.

\bibitem{Banos2006IntegrableGA}
Bertrand Banos.
\newblock Integrable geometries and monge-ampere equations.
\newblock {\em arXiv: Differential Geometry}, 2006.

\bibitem{BANOS2007841}
Bertrand Banos.
\newblock {Monge–Ampère equations and generalized complex geometry— The
  two-dimensional case}.
\newblock {\em Journal of Geometry and Physics}, 57(3):841--853, 2007.

\bibitem{BANOS20112187}
Bertrand Banos.
\newblock Complex solutions of monge–ampère equations.
\newblock {\em Journal of Geometry and Physics}, 61(11):2187--2198, 2011.

\bibitem{phdthesis}
Sylvain Delahaies.
\newblock {\em Complex and contact geometry in geophysical fluid dynamics}.
\newblock PhD thesis, 01 2009.

\bibitem{Kosmann-Schwarzbach2010}
Yvette Kosmann-Schwarzbach and Vladimir Rubtsov.
\newblock {Compatible Structures on Lie Algebroids and Monge-Amp{\`e}re
  Operators}.
\newblock {\em Acta Applicandae Mathematicae}, 109(1):101--135, Jan 2010.

\bibitem{kushner_lychagin_rubtsov_2006}
Alexei Kushner, Valentin Lychagin, and Vladimir Rubtsov.
\newblock {\em Contact Geometry and Nonlinear Differential Equations}.
\newblock Encyclopedia of Mathematics and its Applications. Cambridge
  University Press, 2006.

\bibitem{Lychagin}
V.~V. {Lychagin}.
\newblock {Contact Geometry and Non-Linear Second-Order Differential
  Equations}.
\newblock {\em Russian Mathematical Surveys}, 34(1):149--180, February 1979.

\bibitem{Lychagin2019}
Valentin~V. Lychagin and Volodya Roubtsov.
\newblock {\em Monge--Amp{\`e}re Grassmannians, Characteristic Classes and All
  That}, pages 233--257.
\newblock Springer International Publishing, Cham, 2019.

\bibitem{ASENS_1993_4_26_3_281_0}
Valentin~V. Lychagin, V.~N. Rubtsov, and I.~V. Chekalov.
\newblock A classification of {Monge-Amp\`ere} equations.
\newblock {\em Annales scientifiques de l'\'Ecole Normale Sup\'erieure}, Ser.
  4, 26(3):281--308, 1993.

\bibitem{roulstone2009kahler}
Ian Roulstone, Bertrand Banos, J.D. Gibbon, and V.N. Roubtsov.
\newblock {Kähler geometry and Burgers' vortices}.
\newblock 2009.

\bibitem{RoubRoul2001}
V.~Rubtsov and I.~Roulstone.
\newblock Holomorphic structures in hydrodynamical models of nearly geostrophic
  flow.
\newblock {\em Proc. R. Soc. Lond. A.}, 457:1519–1531, 06 2001.

\bibitem{Rubtsov_1997}
V.~N. Rubtsov and I.~Roulstone.
\newblock {Examples of quaternionic and Kähler structures in Hamiltonian
  models of nearly geostrophic flow}.
\newblock {\em Journal of Physics A: Mathematical and General}, 30(4):L63--L68,
  feb 1997.

\bibitem{Rubtsov2019}
Volodya Rubtsov.
\newblock {\em Geometry of Monge--Amp{\`e}re Structures}, pages 95--156.
\newblock Springer International Publishing, Cham, 2019.

\bibitem{hirohiko2007}
Hirohiko Shima.
\newblock {\em The Geometry of Hessian Structures}.
\newblock WORLD SCIENTIFIC, 2007.

\bibitem{doi:10.1142/S0129167X04002338}
BURT TOTARO.
\newblock {The curvature of a Hessian metric}.
\newblock {\em International Journal of Mathematics}, 15(04):369--391, 2004.

\end{thebibliography}

%%%%%%%%%%%%%%%%%%%%%%%%%%%%%%%%%
%%%%%%%%%%%%%%%%%%%%%%%%%%%%%%%%%

\end{document}